\documentclass[11pt]{article}
\usepackage[utf8]{inputenc}
\usepackage[T1]{fontenc}
\usepackage{amsmath,amssymb,amsthm}
\usepackage[margin=1in]{geometry}
\usepackage{xcolor}
\usepackage{hyperref}
\hypersetup{colorlinks=true,linkcolor=blue,citecolor=blue,urlcolor=blue}

\newtheorem{theorem}{Theorem}
\newtheorem{lemma}[theorem]{Lemma}
\newtheorem{proposition}[theorem]{Proposition}
\newtheorem{corollary}[theorem]{Corollary}
\newtheorem{definition}[theorem]{Definition}
\theoremstyle{remark}
\newtheorem{remark}[theorem]{Remark}

\title{Natural Geometry Based on the Rectilinear Plane}
\author{Mohammed Berkani\\[3pt]
\small Mohammed~I University, Science Faculty\\
\small Honorary member, Mathematics Laboratory Ibn Al-Banna\\
\small  Oujda, Morocco}
\date{\today}

\begin{document}
\maketitle

\begin{flushright}
\itshape Geometry is the visualisation of our intuitive understanding of the universe.
\end{flushright}

\begin{abstract}
The aim of this note is to introduce natural geometry, in which the rectilinear plane is a
primitive. This plane is admitted together with its rectitude, and its rectitude states that a
Lambert quadrilateral [\ref{Lambert}] has its fourth angle right. The fourth angle of a Lambert
quadrilateral may be right, acute or obtuse, and this trichotomy distinguishes three planes.
Starting from the rectitude, we prove the uniqueness of the parallel by a chain of neutral
steps: the rectangle, the triangle whose angles sum to $\pi$, and Legendre's
theorem \cite{legendre}, all borrowed from the classical literature. That uniqueness is equivalent
to the fifth postulate, which we thus reach in Euclid's own wording: the equivalence is read in the
natural direction, and it is not a derivation of the fifth postulate from the other four, since the
independence of that postulate is not in question. In the rectilinear plane the parallels are
characterised as exactly the images of a line under a point-reflection, and equivalently as the
pairs of equidistant lines. The other two planes are then placed beside the rectilinear one,
and the same definitions serve for all three.
\end{abstract}

\section{Introduction}

Euclid's \emph{Elements} (c.~300~BC) deduces the elementary geometry of the plane from five
postulates \cite{heath1,heath2,heath3}:

\begin{enumerate}
\item To draw a straight line from any point to any point.
\item To produce a finite straight line continuously in a straight line.
\item To describe a circle with any centre and distance.
\item That all right angles are equal to one another.
\item That, if a straight line falling on two straight lines makes the interior angles on the
same side less than two right angles, the two straight lines, if produced indefinitely, meet on
that side on which the angles are less than two right angles.
\end{enumerate}

The first four postulates are immediately acceptable. The fifth did not satisfy Euclid himself,
who deferred its use for as long as he could: the first twenty-eight propositions of Book~I \cite{heath1} are
proved without it, and it is first invoked at \cite[Proposition 29]{heath1}. 

From the Greek period onwards, the question
was raised whether the fifth postulate might be a consequence of the other four. Proclus
(410--485) reformulated it and offered a proof of his own, but that proof is fallacious. The
Arabic tradition took up the question with remarkable care. \emph{Omar Khayyam} (1048--1131), in
his commentary on the difficulties of Euclid's postulates, studied the quadrilateral with two
equal perpendiculars erected on a base, and he examined the three hypotheses on its summit
angles; this figure is now called the \emph{Khayyam--Saccheri quadrilateral}
\cite{rosenfeld,khayyam}. \emph{Nasir al-Din al-Tusi} (1201--1274) returned to the question in the
treatise that relieves the doubt concerning parallel lines, written not later than 1251; Rosenfeld
places it in the same tradition \cite{rosenfeld,tusi}. Saccheri (1733) \cite{saccheri1733}
attempted a refutation by \emph{reductio ad absurdum}, and he came within a hair's breadth of a
geometry in which the postulate fails. Lambert (1766) pursued the acute-angle case, but he
obtained no contradiction.
Legendre returned to the question repeatedly throughout his \emph{Éléments de géométrie}
\cite{legendre}. The question was settled in the negative by Lobachevsky (1829) and,
independently, by J\'anos Bolyai (1832), who exhibited a consistent geometry in which the fifth
postulate fails. For the history of these investigations we refer to Greenberg \cite{greenberg}
and Rosenfeld \cite{rosenfeld}. 

Hilbert's \emph{Grundlagen der Geometrie} (1899) \cite{hilbert} settled the foundational
question by making the primitives \emph{neutral}, so that point, line and plane are undefined and
the parallel property
becomes an added axiom, whose independence is established by models
\cite{beltrami1868,greenberg}. In that system the same word denotes different objects as soon as
the parallel axiom is changed, and the several geometries are, in form, reinterpretations of a
single schema.

The present note proceeds otherwise, and it claims less. We take as our sole primitive the
\emph{rectilinear plane}, that is, the plane of ordinary intuition, admitted rather than
constructed. The straight line is then defined once and for all as a shortest path, and parallel
lines as non-meeting ones. Under these definitions the existence of the parallel
is a theorem of neutral geometry, while its uniqueness follows from the primitive. Euclid's fifth
postulate is therefore neither derived, for it is not derivable, nor added as an axiom beside the
others: it is \emph{admitted} as the rectitude of this plane, and it is then \emph{recovered} in
Euclid's own wording, in the precise sense that Section~\ref{sec:primitive} makes explicit. Under
the same two definitions, the other classical geometries appear as the same construction read on
particular spaces, and those spaces differ by their curvature.

The three positions may be summarised as follows.     Euclid took point, line and plane as basic 
notions (giving only informal definitions of them)
 and added the parallel property as a fifth postulate; that postulate is
therefore undemonstrable, and its independence was established only in the nineteenth century.
Hilbert took point, line and plane as undefined primitives, and he kept the parallel property as
an added axiom, whose independence he proved by models. Natural geometry takes a single
primitive, the rectilinear plane, and it admits the parallel property as the \emph{rectitude} of
that plane; the uniqueness of the parallel follows from it, and the fifth postulate is recovered
in Euclid's wording. The third position does not escape the statement; it locates the statement.
What Euclid and Hilbert postulate or prove, natural geometry states once, as the sense of its
primitive.

\section{The primitive and the definitions}
\label{sec:primitive}
\begin{definition}[The rectilinear plane]
The \emph{rectilinear plane} is the plane of ordinary intuition, admitted and not constructed, 
together with its points and its distance. 
Its \emph{rectitude} is the property that every Lambert quadrilateral is a rectangle 
(that is, its fourth angle is also right).
\end{definition}

\begin{definition}[Lambert quadrilateral]\label{Lambert}
A \emph{Lambert quadrilateral} is a quadrilateral that has three right angles.
\end{definition}

\begin{remark}
Such quadrilaterals were already considered by Ibn al-Haytham in the early eleventh century
\cite[p.~160]{greenberg}. For this reason they may be called \emph{Ibn al-Haytham--Lambert
quadrilaterals}.
\end{remark}

\begin{definition}[Segment]
The \emph{segment} joining two distinct points $P$ and $Q$ is a shortest path from $P$ to $Q$.
\end{definition}

The indefinite article is deliberate: in the rectilinear plane a shortest path between two
points is unique, whereas on the sphere, between antipodal points, there are many.

It is well known that Euclid's original definition of a straight line—as a line which lies evenly with the points on itself—is one of the primary 
sources of confusion and conceptual ambiguity in his axiomatic system. 
To resolve this lack of precision in  Euclid's definition, the following formalization is adopted.

\begin{definition}[Straight line]\label{def:straight_line}
A straight line is a set of points, maximal with respect to inclusion, such that for any two of its points the part of the set between them is a segment.
\end{definition}

This is a single definition, valid in every geometry; it names what geometers call a geodesic.
What varies from one space to another is the distance, not the wording.

\begin{definition}[Point-reflection]
The \emph{point-reflection} in a point $O$, written $\sigma_O$, sends $A$ to $A'$ when $O$ is
the midpoint of the segment $AA'$. It is an involution, $\sigma_O\circ\sigma_O=\mathrm{id}$, the identity.
\end{definition}

\begin{definition}[Parallelism]\label{def:parallel}
Two coplanar lines are \emph{parallel} when, produced indefinitely in both directions, they do
not meet.
\end{definition}

\begin{definition}[Natural geometry]
\emph{Natural geometry} is the way of visualising the universe. It presents three planes, the
rectilinear, the spherical and the hyperbolic. The three planes differ by their curvature, which is zero (flat), positive, or negative respectively. The spherical case has a second form, the elliptic plane, in which antipodal points are identified; those four spaces are the particular cases of this note. Among them, the rectilinear plane
is distinguished by its rectitude.
\end{definition}

\section{Existence and uniqueness in the rectilinear plane}
\label{sec:uniq}

Existence uses the point-reflection and the foot of a perpendicular, both of which the
rectilinear plane supplies, and uniqueness turns on one further fact. That fact does not
follow from Definitions~\ref{def:straight_line} and~\ref{def:parallel} alone: those two
definitions hold verbatim for the geodesics of the hyperbolic plane, where the foot and the
point-reflection exist as well and the existence argument goes through, and yet uniqueness fails;
it is the rectitude of the plane, that is, the primitive, which decides.

\begin{proposition}[Existence]\label{prop:existence}
Through a point exterior to a line there passes a parallel to it.
\end{proposition}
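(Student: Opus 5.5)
The plan is the classical neutral argument of Euclid I.27, phrased through the point-reflection as the text just announced. Let $\ell$ be a line and $P$ a point not on $\ell$. First I drop the perpendicular from $P$ to $\ell$, with foot $F$. Then I let $O$ be the midpoint of the segment $PF$, and I consider the point-reflection $\sigma_O$. The candidate parallel is $m=\sigma_O(\ell)$. Since $\sigma_O(F)=P$, the line $m$ passes through $P$.

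The first step is to record that $\sigma_O$ is an isometry sending lines to lines. Take $A,B$ and their images $A',B'$. The triangles $OAB$ and $OA'B'$ have $OA=OA'$ and $OB=OB'$, and the angles at $O$ are vertical, hence equal. By side-angle-side (Euclid I.4, neutral) they are congruent, so $A'B'=AB$. Because $\sigma_O$ preserves distance, it maps shortest paths to shortest paths. It therefore maps sets satisfying Definition~\ref{def:straight_line} to such sets, and maximality is preserved because $\sigma_O$ is an involution. Consequently $m$ is a straight line through $P$.

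The second step is to show that $m$ and $\ell$ do not meet. Suppose $X\in\ell\cap m$. Then $X'=\sigma_O(X)$ also lies in $\ell\cap m$, because $\sigma_O$ exchanges $\ell$ and $m$.

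\emph{Case $X=X'$.} Then $X=O$, so $O\in\ell$. Since $F\in\ell$ as well, the line $\ell$ would contain $O$ and $F$, both on $PF$, and by uniqueness of the segment it would contain $P$. This is a contradiction.

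\emph{Case $X\ne X'$.} The two distinct points $X,X'$ determine a unique line, so $\ell=m$. Then $P\in\ell$, again a contradiction.

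Both cases use that a shortest path between two points of the rectilinear plane is unique, as the text asserts after the definition of segment. Hence $m$ and $\ell$ do not meet, and $m$ is a parallel through $P$ in the sense of Definition~\ref{def:parallel}.

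An alternative disjointness argument avoids the case split. The reflection carries the right angle at $F$ to a right angle at $P$ between $PF$ and $m$. If $\ell$ and $m$ met at $X$, the triangle $FPX$ would have two right angles, which contradicts the exterior angle theorem (I.16).

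The main obstacle is the first step. One must justify that $\sigma_O$ is well defined, which needs the existence and uniqueness of the extension of $AO$ beyond $O$ by the length $OA$. One must also justify that it sends straight lines onto straight lines under the shortest-path definition. I would settle this by the SAS congruence above together with the uniqueness of segments, taking the foot of the perpendicular and the midpoint as supplied by the plane, as the section's preamble allows.
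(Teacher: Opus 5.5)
Your proof is correct and follows the paper's argument: you use the same reflection $\sigma_O$ in the midpoint of the perpendicular $PF$, and you get disjointness because a common point $X$ would force $\sigma_O(X)$ onto both lines. The differences are minor: you justify the borrowed fact that $\sigma_O$ carries lines to lines via SAS (the collinear case $O\in AB$ needs a separate word), and you conclude $\ell=m$ directly from the uniqueness of the line through $X$ and $\sigma_O(X)$, whereas the paper passes through $O\in\ell$ and hence $\sigma_O(\ell)=\ell$.
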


\begin{proof}
A point-reflection carries a line to a line; this neutral fact, like the others used here, is
borrowed. Let $Q$ be exterior to $\ell$, let $H$ be the foot of the perpendicular from $Q$ to
$\ell$, and let $O$ be the midpoint of $QH$. Then $\sigma_O(H)=Q$, so $\sigma_O(\ell)$ passes
through $Q$. Moreover $\sigma_O(\ell)$ does not meet $\ell$: were the two lines to meet at a
point $X$, then $\sigma_O(X)$ would lie on both of them, the segment joining $X$ to
$\sigma_O(X)$ would lie on $\ell$, and its midpoint $O$ would lie on $\ell$; hence
$\sigma_O(\ell)=\ell$, contrary to $\sigma_O(H)=Q$. Thus $\sigma_O(\ell)$ is parallel to $\ell$
in the sense of Definition~\ref{def:parallel}.
\end{proof}

\begin{definition}[Khayyam--Saccheri quadrilateral]\label{def:khayyam_saccheri}
A \emph{Khayyam--Saccheri quadrilateral} is a quadrilateral \( ABCD \) in which the base angles are right and the legs are equal:
\[
AD \perp AB,\qquad BC \perp AB,\qquad AD = BC.
\]
The side \( AB \) is called the \emph{base}, the side \( CD \) the \emph{summit}, and the angles at \( C \) and \( D \) the \emph{summit angles}.
\end{definition}

\begin{remark}\label{rem:quadrilateral}

Let \( M \) and \( N \) be the midpoints of the base \( AB \) and of the summit \( CD \) respectively. 
Then the segment \( MN \) is perpendicular to both \( AB \) and \( CD \) (see \cite[Lemma 6.2]{greenberg}). 
Consequently each of the two halves \( AMND \) and \( MBCN \) is a Lambert quadrilateral.
\end{remark}

\begin{lemma}[Rectitude]\label{lem:rectitude}
In the rectilinear plane a rectangle exists, and there is a triangle whose angles sum to $\pi$.
\end{lemma}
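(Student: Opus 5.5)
The plan is to build a Khayyam--Saccheri quadrilateral, split it along its midline into Lambert quadrilaterals, and let the rectitude turn those into rectangles; the triangle then comes from cutting a rectangle along a diagonal.

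\textbf{Step 1: a Khayyam--Saccheri quadrilateral exists.} Take a segment $AB$. Erect perpendiculars to $AB$ at $A$ and at $B$ on the same side; perpendiculars exist in neutral geometry. On them mark points $D$ and $C$ with $AD=BC$, which uses the third postulate. This gives a quadrilateral $ABCD$ as in Definition~\ref{def:khayyam_saccheri}. Let $M$ and $N$ be the midpoints of $AB$ and $CD$. By Remark~\ref{rem:quadrilateral}, $AMND$ is a Lambert quadrilateral: its angles at $A$, $M$, $N$ are right.

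\textbf{Step 2: rectitude gives a rectangle.} By the rectitude of the rectilinear plane, the fourth angle of $AMND$, at $D$, is also right. So $AMND$ has four right angles and is a rectangle. The same argument applies to $MBCN$, so both summit angles of $ABCD$ are right and $ABCD$ itself is a rectangle. Either one suffices for the first claim.

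\textbf{Step 3: a triangle with angle sum $\pi$.} Take the rectangle $ABCD$ and draw the diagonal $AC$. Opposite sides of a Khayyam--Saccheri quadrilateral are known to satisfy $AD=BC$, and here also $AB=DC$ by the standard neutral facts on these quadrilaterals (cf.\ \cite{greenberg}). The triangles $ABC$ and $CDA$ are then congruent by SSS, so $\angle BAC=\angle DCA$ and $\angle BCA=\angle DAC$. Because $C$ lies in the interior of the right angle $\angle DAB$, angle addition gives
\[
\angle BAC+\angle CAD=\tfrac{\pi}{2},
\]
and likewise at $C$. Therefore the angle sum of triangle $ABC$ is
\[
\angle ABC+\angle BAC+\angle BCA=\tfrac{\pi}{2}+\angle BAC+\angle DAC=\tfrac{\pi}{2}+\tfrac{\pi}{2}=\pi.
\]

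\textbf{Avoiding the need for $AB=DC$.} A shorter route bypasses that fact. In neutral geometry the angle sum of a triangle is at most $\pi$ (Saccheri--Legendre, borrowed). The two triangles $ABC$ and $CDA$ together have angle sum $2\pi$, since the rectangle has four right angles and the diagonal splits the angles at $A$ and $C$. Each triangle is at most $\pi$, so each equals $\pi$. I would use this version.

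\textbf{Main obstacle.} The delicate point is convexity: the diagonal $AC$ must lie inside the angles at $A$ and $C$, so that angle addition is legitimate. I would borrow from \cite{greenberg} that a Khayyam--Saccheri quadrilateral is convex, since $C$ and $D$ lie on the same side of $AB$ and likewise for the other sides. Everything else is a direct appeal to the primitive and to neutral facts.
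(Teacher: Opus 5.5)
Your proposal is correct, and the version you say you would use is the paper's proof: a Khayyam--Saccheri quadrilateral, its midline cutting off a Lambert quadrilateral, the rectitude turning that into a rectangle, and a diagonal giving two triangles of total angle sum $2\pi$, each at most $\pi$ by Legendre's theorem. Your explicit appeal to the convexity of the Saccheri quadrilateral justifies the angle addition the paper takes for granted, and your SSS variant, which rests on the neutral fact that opposite sides of a rectangle are equal, would avoid the Archimedean hypothesis that Legendre's theorem needs.
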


\begin{proof}
Take a line $\ell$, two distinct points $H$ and $H_1$ on it, and on the same side of $\ell$ the
perpendiculars $HQ$ and $H_1Q_1$ of equal length. The quadrilateral $HQQ_1H_1$ has right angles at
$H$ and at $H_1$, and its legs $HQ$ and $H_1Q_1$ are equal; it is therefore a Khayyam--Saccheri 
quadrilateral. In it the summit angles at $Q$ and at $Q_1$ are equal, and the line joining the
midpoint $N$ of $HH_1$ to the midpoint $M$ of $QQ_1$ is perpendicular to both, by the previous remark. The quadrilateral
$HNMQ$ therefore is a Lambert quadrilateral . By the
rectitude of the plane its fourth angle, at $Q$, is right, and so a rectangle exists. We cut it
along a diagonal. The diagonal partitions two corners of the rectangle and leaves the other two
unsplit, so the six angles of the two triangles recombine to the four right angles and sum to
$2\pi$. By Legendre's theorem \cite[Book~I, Prop.~XIX]{legendre}, which uses the Archimedean
hypothesis \cite[Thm.~4.4 and p.~125]{greenberg}, each sum is at most $\pi$, and hence each equals $\pi$. Thus a
triangle of angle sum $\pi$ exists. 
\end{proof}

\begin{theorem}[Uniqueness]\label{thm:uniqueness}
In the rectilinear plane, through a point exterior to a line there passes exactly one parallel
to that line.
\end{theorem}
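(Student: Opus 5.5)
Existence is Proposition~\ref{prop:existence}, so only uniqueness needs proof. The plan is to go from the triangle of angle sum $\pi$ given by Lemma~\ref{lem:rectitude} to the statement that every triangle has angle sum $\pi$. From there we recover the classical argument that two distinct lines through an exterior point cannot both miss the given line. Each step is neutral and is borrowed from the classical literature, as in the proof of the lemma.

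\emph{Step 1: propagation of angle sum $\pi$.} We invoke the neutral theorem that if one triangle has angle sum $\pi$, then every triangle does \cite[Thm.~4.7 and its corollaries]{greenberg}. This is Legendre's second theorem, proved by the Saccheri--Legendre machinery. Splitting a triangle with angle sum $\pi$ into two right triangles, each of which has angle sum $\pi$ by Legendre's bound, produces a rectangle. By doubling and Archimedean repetition this gives arbitrarily large rectangles. Any right triangle is then embedded in such a rectangle, and additivity of the defect together with its nonnegativity forces its defect to be $0$. A general triangle is split by an altitude into two right triangles. Since the Archimedean hypothesis is already used in Lemma~\ref{lem:rectitude}, nothing new is assumed.

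\emph{Step 2: uniqueness.} Let $Q$ be exterior to $\ell$, and let $H$ be the foot of the perpendicular from $Q$. Let $m$ be the perpendicular to $QH$ at $Q$. Then $m$ is a parallel, since it is the parallel of Proposition~\ref{prop:existence} or is obtained from the alternate-interior-angle theorem. Suppose $m'$ is another line through $Q$. Then $m'$ makes an acute angle $\theta$ with $QH$ on one side, say toward the ray $HX$ of $\ell$. Choose points $X_1, X_2, \dots$ on that ray with $HX_1 = QH$ and $X_nX_{n+1} = QX_n$. Each triangle $QX_nX_{n+1}$ is isosceles, so angle sum $\pi$ gives $\angle HX_{n+1}Q = \tfrac12\angle HX_nQ$, hence $\angle HX_nQ = \pi/2^{n+1}$. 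Again by angle sum $\pi$ in the right triangle $QHX_n$, we get $\angle HQX_n = \pi/2 - \pi/2^{n+1}$. For $n$ large this exceeds $\theta$. So the ray of $m'$ lies inside angle $HQX_n$, and by the crossbar theorem it meets the segment $HX_n$ of $\ell$. Thus $m'$ is not parallel, and $m$ is the unique parallel.

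The main obstacle is Step~1, the passage from one triangle to all triangles. Its full proof by enlarging rectangles is lengthy, so I would cite it (Greenberg's or Legendre's version) rather than reproduce it. I would only check that it is neutral and uses no hypothesis beyond the Archimedean one already admitted. Step~2 is routine once the angle sum is known in every triangle.
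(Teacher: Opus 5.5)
Your proof is correct and follows the paper's route: the triangle of angle sum $\pi$ from Lemma~\ref{lem:rectitude} is fed into Legendre's theorem to exclude a second parallel, and Proposition~\ref{prop:existence} supplies existence. The difference is one of detail. The paper cites the equivalence \cite[Theorem~4.7]{greenberg} as a single step, whereas you cite only the propagation of angle sum $\pi$ to all triangles and then prove the remaining implication yourself by Legendre's isosceles-halving construction; this identifies the unique parallel as the perpendicular to $QH$ at $Q$, and it shows where the Archimedean hypothesis enters, namely in $\pi/2^{n+1}$ eventually falling below $\pi/2-\theta$.
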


\begin{proof}
Let $Q$ be exterior to $\ell$. By Lemma~\ref{lem:rectitude} there is a triangle whose angles
sum to $\pi$, and by Legendre's theorem \cite[Book~I, Props.~XX and~XXIV]{legendre} the existence of
such a triangle is equivalent to the uniqueness of the parallel \cite[Theorem~4.7]{greenberg}. Hence at most one line through
$Q$ is disjoint from $\ell$, and by Proposition~\ref{prop:existence} there is exactly one.
\end{proof}

\begin{proposition}[Equidistance of parallels]\label{prop:equidistance}
In the rectilinear plane, two parallel lines are equidistant: the perpendicular distance from a
point of one to the other is the same for every such point.
\end{proposition}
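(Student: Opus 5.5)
The plan is to reduce equidistance to the rectitude of the plane, through Lambert quadrilaterals. Let $\ell$ and $m$ be parallel. Take two points $P$ and $P_1$ on $m$, and drop the perpendiculars $PH$ and $P_1H_1$ to $\ell$. We must show $PH=P_1H_1$. The feet $H$ and $H_1$ are distinct: otherwise two perpendiculars to $\ell$ would pass through one point of $\ell$, or else $P$ and $P_1$ would lie on a single perpendicular, which would then meet $m$ in two points and so coincide with $m$, against $m\cap\ell=\emptyset$. The lines do not meet, so $P$ and $P_1$ lie on the same side of $\ell$. Hence $HPP_1H_1$ is a genuine quadrilateral with right angles at $H$ and at $H_1$.

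The main step is to show that $m$ is perpendicular to $PH$. I would do this with Theorem~\ref{thm:uniqueness}, which is where the rectitude has already been spent. At $P$, erect the perpendicular $m'$ to the line $PH$. Then $m'$ and $\ell$ are both perpendicular to $PH$, so they do not meet: a meeting point would form a triangle with two right angles, which is impossible by the neutral theorem that the angle sum is at most $\pi$ (Legendre, as in Lemma~\ref{lem:rectitude}). Thus $m'$ is a parallel to $\ell$ through $P$. By Theorem~\ref{thm:uniqueness}, $m'=m$, so $m\perp PH$. The same argument at $P_1$ gives $m\perp P_1H_1$.

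The quadrilateral $HPP_1H_1$ therefore has four right angles: it is a rectangle, and even three of them make it a Lambert quadrilateral. It remains to show that opposite sides of a rectangle are equal. This is the step I expect to be the real obstacle. The rectitude says the fourth angle is right, not that the sides are equal, so this must be argued in neutral geometry. I would argue by contradiction. Suppose $PH>P_1H_1$, and take $R$ on the segment $HP$ with $HR=H_1P_1$. Then $HRP_1H_1$ is a Khayyam--Saccheri quadrilateral. By Remark~\ref{rem:quadrilateral} together with the rectitude, its summit angles are right, so $\angle HRP_1$ is right. But then in triangle $RPP_1$ the angle at $P$ is right (from $m\perp PH$) and the angle at $R$ is right (its supplement). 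This gives two right angles in one triangle, again contradicting the neutral angle-sum bound. The case $PH<P_1H_1$ is symmetric. Hence $PH=P_1H_1$, and the perpendicular distance from $m$ to $\ell$ does not depend on the point chosen.

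If there is any worry about the betweenness needed to place $R$, an alternative is the congruence of triangles. Cut the rectangle along the diagonal $HP_1$. The alternate angles $\angle PP_1H$ and $\angle P_1HH_1$ are equal, since each complements the same angle to a right angle. Then ASA on the triangles $HPP_1$ and $P_1H_1H$, which share the side $HP_1$, gives $PH=P_1H_1$ directly. I would present this version, since it uses only the right angles already obtained.
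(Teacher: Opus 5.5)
Your proof is correct, and it follows a different route from the paper's. The paper notes that $QH$ and $Q'H'$, both perpendicular to $\ell$, are parallel, so that $QHH'Q'$ is a parallelogram. Then, uniqueness being known, it imports the classical theory of parallels wholesale (Euclid I.29 and I.34): opposite angles are equal, so the quadrilateral is a rectangle, and opposite sides are equal.

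You never need the parallelogram. You use Theorem~\ref{thm:uniqueness} once, in the concrete form ``the perpendicular to $PH$ at $P$ is a parallel to $\ell$ through $P$, hence equals $m$''; this is exactly the instance of I.29 that is required. You then get the equality of sides from the rectitude itself, via the Khayyam--Saccheri quadrilateral $HRP_1H_1$ and the impossibility of a triangle with two right angles, instead of from I.34.

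The paper's argument is shorter. Yours is self-contained and stays closer to the programme of drawing everything from the primitive. It also justifies $H\neq H_1$, which the paper merely asserts. Your contradiction argument is complete as it stands: $R$ lies strictly between $H$ and $P$ because $HR=H_1P_1<HP$, so the betweenness worry does not arise.

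One correction concerns the ASA variant you say you would present: it does not use ``only the right angles already obtained''. Write $\alpha=\angle PHP_1$. The split right angle at $H$ gives $\angle P_1HH_1=\pi/2-\alpha$. But $\angle PP_1H=\pi/2-\alpha$ needs triangle $HPP_1$ to have angle sum $\pi$, which the four right angles alone do not give. This is one line, by the argument of Lemma~\ref{lem:rectitude}: the two triangles cut off by the diagonal have angles totalling $2\pi$, each sum is at most $\pi$, so each equals $\pi$. With that line added, the ASA version is also complete. Your contradiction version needs no such addition.
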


\begin{proof}
Let $m$ and $\ell$ be parallel, let $Q$ and $Q'$ be two distinct points of $m$, and let $H$ and
$H'$ be the feet of the perpendiculars from $Q$ and from $Q'$ to $\ell$.As $m$ and $\ell$ are parallel they are distinct (a line meets itself), so $m$ lies in one of the open half-planes bounded by $\ell$; hence $Q$ and $Q'$ are on the same side, and $H\neq H'$.  The lines $QH$ and
$Q'H'$ are both perpendicular to $\ell$, and two lines perpendicular to the same line are
parallel; since $QQ'\subset m$ is parallel to $\ell\supset HH'$, the quadrilateral $QHH'Q'$ has
its two pairs of opposite sides parallel, that is, it is a parallelogram.  By Theorem~\ref{thm:uniqueness} the parallel through a point is unique, so the classical theory of parallels applies \cite[Book~I, Props.~29 and~34]{heath1}: in a parallelogram the opposite angles are equal. 
Since the angles of $QHH'Q'$ at $H$ and at $H'$ are right, then  $\angle Q=\angle Q'= \pi/2$ and $QHH'Q'$ is
a rectangle. In a rectangle the opposite sides are equal, so $QH=Q'H'$. $QH$ is the distance from $Q$ to $\ell$, since the perpendicular is the shortest segment from a point to a line.  The point $Q'$ is an
arbitrary point of $m$, and the distance is therefore the same for every such point.
\end{proof}

\begin{proposition}[Parallels as point-reflection images]\label{prop:reflection_parallel}
In the rectilinear plane, let $\ell$ and $m$ be two distinct lines. Then $\ell$ and $m$ are
parallel if and only if $m$ is a point-reflection image of $\ell$.
\end{proposition}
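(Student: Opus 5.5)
The proof has two directions. The reverse direction is already contained in the proof of Proposition~\ref{prop:existence}; the forward direction needs Theorem~\ref{thm:uniqueness} together with one construction.

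For the reverse direction, suppose $m=\sigma_O(\ell)$ with $m\neq\ell$. If $\ell$ and $m$ met at a point $X$, then $\sigma_O(X)$ would also lie on both lines, because $\sigma_O$ is an involution exchanging $\ell$ and $m$. We distinguish two cases.
\begin{itemize}
\item If $\sigma_O(X)\neq X$, the two distinct lines $\ell$ and $m$ would share the two points $X$ and $\sigma_O(X)$. This is impossible, since two points determine a unique segment in the rectilinear plane, and hence a unique line.
\item If $\sigma_O(X)=X$, then $X=O$. But a point-reflection fixes every line through its centre: each point $A$ of such a line is sent to the point on the same line at the same distance beyond $O$. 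Hence $\sigma_O(\ell)=\ell$, contradicting $m\neq\ell$.
\end{itemize}
So $\ell$ and $m$ do not meet, i.e.\ they are parallel.

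For the forward direction, let $\ell\parallel m$ and pick any $Q\in m$. Let $H$ be the foot of the perpendicular from $Q$ to $\ell$, and let $O$ be the midpoint of $QH$. By the argument of Proposition~\ref{prop:existence}, $\sigma_O(\ell)$ is a line through $Q$ that is parallel to $\ell$. By Theorem~\ref{thm:uniqueness} there is only one parallel to $\ell$ through $Q$, so $\sigma_O(\ell)=m$. This also shows that $O$ may be any midpoint of a common perpendicular, and in fact any midpoint of a segment $QP$ with $Q\in m$ and $P\in\ell$ would do. The latter remark is not needed.

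The only delicate point is the neutral fact that $\sigma_O$ carries lines to lines and preserves incidence. This is the same borrowed fact used in Proposition~\ref{prop:existence}: $\sigma_O$ is an isometry, so it maps shortest paths to shortest paths, and it therefore preserves Definition~\ref{def:straight_line}. The step I would check most carefully is the fixed-point case $X=O$ in the reverse direction. To handle it I would prove directly that a line through $O$ is $\sigma_O$-invariant: for $A$ on that line, extend the segment $AO$ beyond $O$ by the length $OA$; the resulting point is $\sigma_O(A)$, and by maximality in Definition~\ref{def:straight_line} it lies on the same line.
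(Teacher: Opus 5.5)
Your proof is correct and follows the paper's own argument. The forward direction (foot $H$, midpoint $O$ of $QH$, then uniqueness of the parallel forcing $\sigma_O(\ell)=m$) is identical. In the reverse direction you split off the case $\sigma_O(X)\neq X$ and settle it with ``two distinct lines share at most one point'', whereas the paper treats both cases at once: $O$, the midpoint of $X\sigma_O(X)$, then lies on $\ell$, so $\sigma_O(\ell)=\ell$.
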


\begin{proof}
If $\sigma_O(\ell)$ meets $\ell$ at a point $X$, then $\sigma_O(X)$ also lies on $\ell$, so the
segment joining $X$ to $\sigma_O(X)$ lies on $\ell$, its midpoint $O$ lies on $\ell$, and
$\sigma_O(\ell)=\ell$. Hence an image that does not coincide with $\ell$ does not meet $\ell$: it
is a line parallel to $\ell$.
If $m=\sigma_O(\ell)$ for some point $O$, then $O$ does not lie on $\ell$, for otherwise
$\sigma_O(\ell)=\ell$, contrary to $m\neq\ell$; by the previous paragraph the lines $\ell$ and $m$
do not meet, that is, they are parallel.
Conversely, let $\ell$ and $m$ be parallel, let $Q$ be a point of $m$, let $H$ be the foot of the
perpendicular from $Q$ to $\ell$, and let $O$ be the midpoint of $QH$. Then $\sigma_O(H)=Q$ lies on
the image $\sigma_O(\ell)$, which is a line through $Q$; since $O$ does not lie on $\ell$ (it is an
interior point of the segment $QH$, whose only point on $\ell$ is $H$), that image is parallel to
$\ell$ by the first paragraph; and by Theorem~\ref{thm:uniqueness} the parallel through $Q$ is
unique, so $\sigma_O(\ell)=m$.
\end{proof}

\begin{corollary}\label{cor:equidistance}
In the rectilinear plane, let $\ell$ and $m$ be two distinct lines. Then $\ell$ and $m$ are
parallel if and only if they are equidistant.
\end{corollary}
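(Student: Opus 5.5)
The forward direction is Proposition~\ref{prop:equidistance}: if $\ell$ and $m$ are parallel, then the perpendicular distance from a point of $m$ to $\ell$ is the same for every point of $m$. Since the paper calls a pair of lines equidistant in this sense, this direction needs no further argument. The only point worth noting is symmetry. The same proposition, applied with the roles of $\ell$ and $m$ exchanged, shows that the distance from points of $\ell$ to $m$ is also constant. In fact the rectangle $QHH'Q'$ produced in that proof shows that both constants equal $QH$.

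For the converse, suppose $\ell\neq m$ and every point of $m$ lies at the same positive distance $d$ from $\ell$. I would argue by contradiction and assume that $m$ meets $\ell$ at a point $X$. Then $X\in m$ lies at distance $0$ from $\ell$, so $d=0$. Every point of $m$ would then lie on $\ell$, so $m\subseteq\ell$. Since both are lines, that is, maximal sets in the sense of Definition~\ref{def:straight_line}, maximality forces $m=\ell$, which contradicts the hypothesis that the lines are distinct. Hence $m$ does not meet $\ell$, and the lines are parallel by Definition~\ref{def:parallel}.

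One point needs care. Equidistance should be read as a constant distance of the points of one line to the other. The degenerate constant $0$ is excluded automatically, because distance $0$ from $\ell$ means the point lies on $\ell$, by positivity of the distance. So the converse is essentially immediate, and the substance of the corollary is in Proposition~\ref{prop:equidistance}.

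An alternative route for the converse goes through Proposition~\ref{prop:reflection_parallel}. Take $Q\in m$, let $H$ be its foot on $\ell$, and let $O$ be the midpoint of $QH$. Then $\sigma_O(\ell)$ is the parallel to $\ell$ through $Q$. One would show $m=\sigma_O(\ell)$ by checking that every point of $m$ on the same side of $\ell$ at distance $d$ lies on that parallel. This route is longer, and the contradiction argument above suffices, so I would present that one.
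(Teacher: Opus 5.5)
Your proof is correct and follows the paper's argument: the forward direction is Proposition~\ref{prop:equidistance}, and the converse is that equidistant distinct lines cannot meet. The paper states that step in one line; you justify it through distance $0$ and the maximality in Definition~\ref{def:straight_line}, noting only that if you assume $d>0$ from the start, the meeting point already gives the contradiction and the maximality step is needed only when that assumption is dropped.
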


\begin{proof}
If the lines are parallel, they are equidistant by Proposition~\ref{prop:equidistance}. Conversely,
if they are equidistant they cannot meet, so they are parallel.
\end{proof}

\begin{remark}[Admissible centres]
The centre of Proposition~\ref{prop:existence} is the midpoint of $QH$. Any point of the
midline between $\ell$ and the parallel through $Q$ is also a centre, and all of them give the
same image. This agreement is peculiar to the rectilinear plane: on a curved plane the
admissible centres fan out, and the images diverge. The reason is that the point-reflection is an
isometry only where the curvature is the same at every point, and it is that symmetry which makes
the construction of Proposition~\ref{prop:existence} canonical.
\end{remark}

\section{Particular spaces}
\label{sec:spaces}

\, \, \,\, \textbf{Hyperbolic plane}: the lines are the geodesics, that is, the maximal sets whose part
between two points is a segment (Definition~\ref{def:straight_line}). Through a point exterior
to $\ell$ there
pass \emph{several} lines which do not meet $\ell$: the \emph{ultraparallels}, each with its own
common perpendicular, and the \emph{limiting parallels} of Lobachevsky \cite[Ch.~6, p.~183]{greenberg}, which approach $\ell$
asymptotically. All of them are parallel in the sense of Definition~\ref{def:parallel}, and they
are not unique: in this plane the uniqueness of the parallel fails. Among them, the
point-reflection images of $\ell$ are the ultraparallels alone. They are not equidistant either:
the distance is minimal along the common perpendicular and increases away from it.

\textbf{Spherical plane:} the lines are the great circles. Any two distinct great circles meet in a pair of
antipodal points. The point-reflection of a great circle is a great circle, and the two meet. No
pair of lines satisfies Definition~\ref{def:parallel}. Arcs that are point-reflections of one
another do exist; extended to maximal lines, they meet. The parallel relation is empty. The
sphere is compact, and it is not another copy of the rectilinear plane.

\textbf{Elliptic plane:} the sphere with antipodal points identified. Lines meet in exactly one point,
the image of a line under a point-reflection is a line, and it meets the original. No parallel
exists. The case is recorded here so that this absence is not confused with a failure of
Definition \ref{def:straight_line}.

\begin{remark}[Intuition and variable curvature]\label{rem:outlook}
The primitive is not a postulate but an intuition, and an intuition, once visualised, can be
enriched. The difference with Hilbert lies there and not in the axioms: his primitives are names
of deliberately empty sense, and he said so himself: ``One must be able to say at all times---
instead of points, lines and planes---tables, chairs and beer mugs'' \cite[p.~72]{greenberg}. The
rectilinear plane, by contrast, is an intuition that everyone imagines; it is one of the three
planes of \emph{constant} curvature, zero (flat), positive or negative, and it is constant curvature
that makes each of them uniform: the uniqueness of the parallel, its multiplicity, its absence.
Four spaces are particular cases of the same two definitions: the flat plane, whose curvature is
zero; the hyperbolic plane, whose curvature is negative; and the two forms of the positive
curvature, the sphere and the elliptic plane \cite{riemann1854}. For the metric treatment of the
classical planes we refer to \cite{millman}.
A surface of \emph{variable} curvature is not obtained by gluing patches of those three planes:
along a seam the curvature would jump, and the lines would break. Such surfaces are the general
case, and the three planes the case of constant curvature \cite{riemann1854}. Smooth surfaces
whose curvature takes both signs do exist, and the classical example is the torus of revolution
in space, whose curvature is positive on the outer part, negative on the inner part, and zero on
the two circles that separate them \cite{docarmo1976}. On such a surface the two definitions keep
their wording, but the local curvature no longer decides the parallels: whether two lines meet is
a matter of the surface as a whole. The variable case belongs to the geometry of variable
curvature, and to the deformations that physics describes \cite{einstein1921}.
\end{remark}

\section{Conclusion}
\label{sec:conclusion}

The epigraph states the sense of the word primitive used here. Geometry does not begin with names
whose sense is fixed afterwards by axioms; it begins with a plane that one visualises, and the
names receive their sense from it. Euclid's postulate has to say that two lines, produced
indefinitely, meet, and its conclusion speaks of what happens arbitrarily far away. The rectitude
speaks of a quadrilateral that one draws, whose fourth angle one sees.

The rectitude is not an axiom under another name. An axiom of Hilbert's kind may be dropped, and
the words it contains given another sense: that is what its independence means, and the models
show that it can be done \cite{beltrami1868}. The rectitude cannot be dropped in that
way. If the fourth angle of a Lambert quadrilateral is acute or obtuse, the plane is no longer
the plane of ordinary intuition, and what we obtain is not that plane without an axiom but
another space, described on the first, as the models of the hyperbolic plane are drawn on the
rectilinear one.

In this sense geometry is the visualisation of our intuitive understanding of the universe.

\end{document}